\newcommand{\ud}{\mathrm{d}}
\newcommand{\mat}[1]{\mathbf{#1}}
\newcommand{\R}{\mathbf{R}}
\renewcommand{\SS}{\mathbf{S}}
\newcommand{\RR}{\mathbb{R}}
\newcommand{\CC}{\mathbb{C}}
\newcommand{\p}{\mathbf{p}}
\newcommand{\q}{\mathbf{q}}
\newcommand{\w}{\mathbf{w}}
\newcommand{\y}{\mathbf{y}}
\newcommand{\z}{\mathbf{z}}
\newcommand{\PP}{\mathbf{P}}
\newcommand{\ZZ}{\mathbf{Z}}
\newcommand{\WW}{\mathbf{W}}
\newcommand{\veclambda}{\boldsymbol{\lambda}}
\newcommand{\defeq}{\coloneqq}
\newcommand{\from}{\colon}
\newcommand{\bigO}{\mathcal{O}}
\newcommand{\tran}{\mathsf{T}}
\begin{document}

\title{Collective Symplectic integrators on \texorpdfstring{$S_2^n \times T^{\ast} \RR^m$}{S2n x T*Rm}}

\author{Geir Bogfjellmo}
\address{ICMAT, 28049 Madrid SPAIN}
\email{geir.bogfjellmo@icmat.es}
\subjclass[2010]{Primary 65P10 ; Secondary 65L06, 37M15}

\begin{abstract}A novel symplectic integrator for Hamiltonian equations on $S_2^n \times
  T^{\ast} \RR^m$ is developed and studied.
  Partitioned Runge--Kutta methods for Hamiltonian systems on products of
  Hamiltionian manifolds are studied, specifically, algebraic conditions for
  their symplecticity are derived.
\end{abstract}

\maketitle

\section{Introduction}
When a differential equation inhibits geometrical properties, it is considered
advantageous that numerical approximations to the equation inhibits the same
properties.
One such geometrical property is the \emph{symplecticity} inhibited by Hamiltonian systems.
In general, a \emph{symplectic} space is a manifold $M$, equipped with a closed
two-form $\omega$.
A differential equation
\[\frac{\ud}{\ud t}z=X(z),\]
where $X$ is a vector field over $M$, is symplectic if
the Lie derivative of $\omega$,
\[\mathcal{L}_X \omega = \ud \imath_X \omega= 0\]

Numerical approximations preserving symplecticity are known as symplectic integrators.
Symplectic integrators for ordinary differential equations evolving on vector
spaces have been studied by many authors, see for instance \cite{HLW, CSS}
and the references therein.

The situation for non-flat geometries is more complicated, and usually relies on
the particular geometry.

This paper studies a special case of non-flat symplectic space, the product of
copies of $S^2$ and the canonical symplectic space $T^\ast \RR^M$. This space is
of special interest in spin-lattice dynamics.
\section{Hamiltonian systems on \texorpdfstring{$S_2^n \times T^{\ast} \RR^m$}{S2n x T*Rm}}
Hamiltonian systems evolving on $M= S_2^n \times T^{\ast} \RR^m$ arise in e.g.
spin-lattice-electron (SLE) equations. See, e.g. \cite{MaDuWo08, MaDuWo12, ASDBook}.
In these systems, each particle $i$ state is given by a position $\q_i\in \RR^3$, a momentum $\p_i \in \RR^3$, and a spin $\w_i\in S_2$.
With $k$ particles, the total state space is thus $S_2^k \times T^\ast
\RR^{3k}$.

We write the state of the system $(\w, \q, \p)$, where
\[\w = \begin{bmatrix} \w_1 \\ \w_2 \\ \vdots \\ \w_k \end{bmatrix} \in S_2^k,
  \qquad
  \p = \begin{bmatrix} \p_1 \\ \p_2 \\ \vdots \\ \p_k \end{bmatrix} \in \RR^{3k},
  \qquad
  \q = \begin{bmatrix} \q_1 \\ \q_2 \\ \vdots \\ \q_k \end{bmatrix} \in \RR^{3k}.
\]

The Hamiltonian for SLE-systems is
\begin{equation}
\begin{aligned}
H(\w,\q,\p) &= T_L(\p) + U_L(\q) + H_m(\w,\q)\\
 &= \frac{1}{2}\sum_{i=1}^n \frac{\norm{\p_i}^2}{m_i} + U_L(\q) - \frac{1}{2} \sum_{i,j=1}^n  J_{ij}(\q) \langle \w_i, \w_j\rangle
 \end{aligned}
\label{eq: SLEHam}
\end{equation}
where $m_i$ is the mass of each individual particle, $U_L(\q)$ is a potential
depending on the positions $\q$, and $J_{ij}$ determines the strength of the spin couplings, depending on the positions $\q$.
Typically, $J_{ij}(\q)= J(\norm{q_i-q_j})$, but other functions are possible.

The resulting Hamiltonian equations are
\begin{equation}
\begin{aligned}
\frac{\ud \q_i}{\ud t} &= \frac{\p_i}{m_i} \\
\frac{\ud \p_i}{\ud t} &= - \frac{\partial U_L}{\partial \q_i}(\q) + \frac{1}{2}  \sum_{j,k=1}^n  \frac{\partial J_{jk}(\q)}{\partial \q_i} \langle \w_j, \w_k\rangle\\
\frac{\ud \w_i}{\ud t} &=\w_i \times \left[\sum_j J_{ij}(\q) \w_j\right]
\end{aligned}
\label{eq: SLE}
\end{equation}

For the following, we define the matrix
\[
  \mat{M}= \begin{pmatrix}m_1 \mat{I}_3& \\
    & m_2\mat{I}_3 & \\
    & & \ddots & \\
    & & & m_k \mat{I}_3
    \end{pmatrix}
\]

Symplectic integration of \eqref{eq: SLE} has previously been obtained by splitting methods, see e.g. \cite{OmMrFo01}. These methods rely on a symmetric splitting where each spin is integrated individually.
A disadvantage of this approach is that the spins has to be updated in sequence, limiting the possibilities of parallelization.
Furthermore, the splitting methods are incapable of handling more general
Hamiltonians. For instance, Perera et al. \cite{Per16} introduce an anisotropy
term.

An alternative to the method in the present paper is the method used by Hellsvik
et. al. in \cite{HeThMo18}

In this article, we suggest a novel approach for symplectic integration on $S_2^n \times T^{\ast} \RR^m$.
This approach is based on a partitioned integrator, where the positions and
momenta are integrated with a standard symplectic partitioned Runge--Kutta
method, and the spins are integrated with a collective symplectic integrator on
$S_2^n$. The development of symplectic integrators on $S_2^n$ is due to by
McLachlan, Modin and Verdier \cite{MMV14,MMV15,MMV17}.

The novel integrator is implicit, as opposed to splitting-based methods.

The integrators derived in this approach can in principle handle any Hamiltonian
on $S_2^n \times T^{\ast} \RR^m$.
We will, however, focus on the case where the dependence on the momentum $\p$ can be split of as a quadratic kinetic term $T_L$.

The geometry needed for these integrators is as follows:
\begin{enumerate}
\item The symplectic manifold $M$ is embedded into a Poisson manifold $P$, such that the image of $M$ is a symplectic leaf.
\item $P$ has a full realization as a canonical symplectic manifold $N\simeq T^{\ast} \RR^d$, i.e. there exists an onto Poisson map $\psi \from N \to P$.
\end{enumerate}

For the integrators to be well-defined, it is necessary to extend the Hamiltonian $H$ into a function $\bar{H}\from P\to \RR$.
This extension is not-unique, however we will fix it to a ``canonical'' choice,
following \cite{MMV17}.
The dynamics on the top symplectic manifold $N$ are defined by the pulled-back
Hamiltonian $\psi\circ \bar{H}$.

\section{Symplectic and Poisson structures}
To proceed, we need to define the various symplectic and Poisson structures involved.

\subsection{Symplectic structure on \texorpdfstring{$M$}{M}}
Let $M= \SS \times T^\ast V$, where $\SS= S_2^n$ and $V= \RR^m$. (In SLD $m=dn$,
where $d$ is the dimension of the lattice.)

Both $(\SS, \omega^S)$ and $(T^\ast V, \omega^V)$ are symplectic manifolds with
$\omega^V$ the canonical two-form and
\[ \omega^S = \sum_i \ud A_i,
\]
where $A_i$ is the standard area form on the $i$th sphere.

Let $\omega$ be the product symplectic form
\[\omega = \pi_1^\ast \omega^V + \pi_2^\ast \omega^{S}
\]
where $\pi_1\from M \to T^\ast V$ and $\pi_2 \from M \to \SS$ are the canonical projections.
$(M, \omega)$ is the direct product symplectic manifold of $\SS$ and $T^\ast V$.

\subsection{Poisson structure on \texorpdfstring{$P$}{P}}
McLachlan, Modin and Verdier \cite{MMV17} obtained symplectic integrators on $\SS$ by embedding $\SS= S_2^n$ as a symplectic leaf in the Poisson manifold $\RR^{3n}$.
A straightforward generalization of this is to embed $M=T^\ast V \times \SS$ as a symplectic leaf in a Poisson manifold.

Let $P = T^\ast V\times \R$, where $\R= \RR^{3n}$.

$(T^\ast V, \{\cdot,\cdot\}_V)$ is a Poisson manifold whose Poisson bracket is induced by the symplectic structure,
\[
\{f,g\}_V = (\omega^V)^{-1}(\ud f \wedge \ud g)
\]
where $(\omega^V)^{-1}$ is the two-vector obtained by inverting the symplectic form.

The Poisson bracket on $\R=(\RR^{3})^n$, which we denote $\{\cdot,\cdot\}_S$, is the sum of Poisson brackets on each copy of $\RR^3$,
\[
\{f,g\}_S(\w) = \kappa(\w)(\ud f \wedge \ud g) =\sum_i \left\langle \w_i, \left[\frac{\partial f(\w)}{\partial \w_i}, \frac{\partial g(\w)}{\partial \w_i}\right]\right\rangle.
\]

On $P$, we obtain a Poisson bracket by taking the sum of the brackets on each component,
\begin{equation}
  \begin{aligned}
    \{f,g\}(\y,\w) = &(\omega^V)^{-1} (\ud_\y f(\y,\w) \wedge \ud_\y g(\y,\w)) \\
    &+ \kappa(\w)(\ud_\w f(\y,\w) \wedge \ud_\w g(\y,\w)),
    \end{aligned}
\label{eq: PoissP}
\end{equation}
for all $\y\in T^\ast V, \w\in \R$.

In the above equation, $\ud_\y$ and $\ud_\w$ denote the partial differentials, e.g.
\[\ud_\y f = \sum_i \frac{\partial f}{\partial \y_i} \ud \y_i.\]

Using canonical coordinates $\y=(\p,\q)$, the full form of the Poisson bracket is
\[
\{f,g\}(\p,\q,\w) = \sum_{i=1}^m \left(\frac{\partial f}{\partial q_i} \frac{\partial g}{\partial p_i} -  \frac{\partial f}{\partial p_i} \frac{\partial g}{\partial q_i}\right) + \sum_{j=1}^n \left\langle \w_j, \left[\frac{\partial f}{\partial \w_j}, \frac{\partial g}{\partial \w_j}\right]\right\rangle.
\]

\begin{proposition}
$M$ is a symplectic leaf in $P$.
\end{proposition}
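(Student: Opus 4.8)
The plan is to realize $M=\SS\times T^\ast V$ inside $P=T^\ast V\times\R$ through the embedding $\iota\from\SS\times T^\ast V\to P$, $(\w,\y)\mapsto(\y,\w)$, where the $i$-th factor $S_2$ is included as the unit sphere $\{\w_i:\langle\w_i,\w_i\rangle=1\}$ in the $i$-th copy of $\RR^3$ inside $\R$, and then to show that $\iota(M)$ is a maximal connected integral manifold of the characteristic distribution $D$ of \eqref{eq: PoissP}. Here $D_z=\mathrm{span}\{X_f(z):f\in C^\infty(P)\}$, and the symplectic leaves are by definition precisely the maximal connected integral manifolds of $D$. It therefore suffices to prove (i) that $\iota(M)$ is an integral manifold, i.e.\ $D_z=T_z\iota(M)$ for every $z\in\iota(M)$, and (ii) that no integral manifold through such a $z$ is strictly larger.

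For (i) I would exploit that the bracket \eqref{eq: PoissP} is \emph{block diagonal}: it contains no cross terms mixing the $\y$- and $\w$-differentials, so the bundle map induced by the bivector preserves the splitting $T_zP=T_\y(T^\ast V)\oplus T_\w\R$, and $D_z$ is the direct sum of the images of its two blocks. On the $T^\ast V$ factor the Poisson tensor $(\omega^V)^{-1}$ is nondegenerate, so the Hamiltonian vector fields of functions of $\y$ alone already span all of $T_\y(T^\ast V)$. On the $\R$ factor, rewriting the spin bracket via the triple-product identity $\langle\w_i,[\uu,\vv]\rangle=\langle\w_i\times\uu,\vv\rangle$ for $\uu,\vv\in\RR^3$ shows that the Hamiltonian vector field of $f$ has $i$-th $\w$-component $\w_i\times\partial f/\partial\w_i$; letting $\partial f/\partial\w_i$ range over $\RR^3$ sweeps out the whole plane $\w_i^\perp$ (as $\w_i\neq0$). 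Hence
\[
D_{(\y,\w)}=T_\y(T^\ast V)\oplus\bigoplus_{i=1}^n\w_i^\perp=T_\y(T^\ast V)\oplus\bigoplus_{i=1}^n T_{\w_i}S_2=T_{(\y,\w)}\iota(M),
\]
so $\iota(M)$ is a connected integral manifold of the expected dimension $2m+2n$.

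For (ii) I would produce the Casimir functions that trap the leaves inside $\iota(M)$. The $n$ functions $C_i(\y,\w)=\langle\w_i,\w_i\rangle$ Poisson-commute with everything: the $(\omega^V)^{-1}$ term vanishes because $\partial C_i/\partial\y=0$, while the spin term vanishes because $\langle\w_i,[\w_i,\cdot]\rangle=\langle\w_i,\w_i\times\cdot\rangle\equiv0$. Being Casimirs, the $C_i$ are constant along any symplectic leaf; since $C_i\equiv1$ on $\iota(M)$, the leaf $L$ through a point $z\in\iota(M)$ lies in $\{C_1=\dots=C_n=1\}=\iota(M)$. Conversely, $\iota(M)$ is itself a connected integral manifold through $z$, so $\iota(M)\subseteq L$. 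Therefore $L=\iota(M)$, which is exactly the assertion that $M$ is a symplectic leaf.

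The \emph{crux} of the argument is the distribution computation on the spin factor: identifying the image of the Lie--Poisson bivector $\kappa(\w)$ at $\w_i$ with the tangent plane $\w_i^\perp$ to the unit sphere --- equivalently, that the two-spheres are the regular coadjoint orbits of $\mathfrak{so}(3)^\ast\cong\RR^3$. Once this is secured, the nondegeneracy on the $T^\ast V$ factor and the Casimir verification are immediate. As a closing check (not strictly demanded by the statement) one can invert $D$ slot by slot to confirm that the symplectic form the Poisson structure induces on the leaf coincides with the product form $\omega$, so that the embedding $\iota$ is genuinely symplectic.
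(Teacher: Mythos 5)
Your proof is correct, but it is considerably more self-contained than the paper's, which disposes of the proposition in one line by citing two standard facts: that $\SS=S_2^n$ is a symplectic leaf of the Lie--Poisson structure on $\R=\RR^{3n}$ (the spheres being coadjoint orbits of $\mathfrak{so}(3)^\ast$), and that $T^\ast V$ is symplectic, so that the product is a leaf of the product Poisson structure. You instead re-derive both facts from the definition of a leaf as a maximal connected integral manifold of the characteristic distribution: the block-diagonal form of \eqref{eq: PoissP} reduces the computation to the two factors, nondegeneracy of $(\omega^V)^{-1}$ handles the $\y$-block, the identification of the image of $\kappa(\w)$ with $\bigoplus_i \w_i^\perp$ handles the spin block, and the Casimirs $C_i(\y,\w)=\langle\w_i,\w_i\rangle$ give maximality. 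All of these steps are sound (including the connectedness of $\iota(M)$ and the identification of the level set $\{C_1=\dots=C_n=1\}$ with $\iota(M)$), and your closing remark that the induced leaf symplectic form is the product form $\omega$ is a worthwhile check that the paper leaves implicit. What the paper's route buys is brevity by leaning on \cite{MMV17} and the general theory; what yours buys is an explicit verification that would survive even for a reader who does not already know that the two-spheres are the regular coadjoint orbits.
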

\begin{proof}
$\SS$ is a symplectic leaf in $\R$ and $T^\ast V$ is a symplectic manifold.
\end{proof}

Having embedded $M$ into $P$, we also need to extend vector fields on $M$ to vector fields on $P$.
Taking a leaf from \cite{MMV17}, we do this by
letting the Hamiltonian and vector fields be constant on rays, i.e. sets of the
form
$
\{(\y,\veclambda \odot \w) \colon  \veclambda =(\lambda_1, \dotsc, \lambda_n)\in \RR^n_+ \} \subset P,
$
where $\y \in T^\ast V, \w \in \R$ and
$
\veclambda\odot \w = (\lambda_1\w_1, \dotsc, \lambda_n\w_n).
$

We define a projection map $\rho_1\from \R \to \SS$ by
\[
\rho_1(\w_1, \w_2, \dotsc, \w_n) \defeq \left( \frac{\w_1}{\norm{\w_1}}, \dotsc, \frac{\w_n}{\norm{\w_n}}\right),
\]
and a projection map $\rho \from P \to M$ by
\[
\rho(\y, \w)= (\y, \rho_1(\w)) 
\]

It is a simple exercise to show that if $H\from M \to \RR$ is a Hamiltonian with
associated vector field, $X_H$, then $\bar{H}=H\circ \rho \from P \to \R$
is a Hamiltonian on $P$ with associated Poisson vector field
\[X_{\bar{H}}(\y,\w) = X_H(\rho(\y, \w)).
\]
We call this vector field the \emph{extension of} $X_H$ \emph{to} $P$.

Notice that $X_{\bar{H}}$ is tangent to every symplectic leaf, not only to $M$.

In particular, for the Hamiltonian of interest \eqref{eq: SLEHam}, the extended Hamiltonian takes the form
\begin{equation}
\bar{H}(\w, \p,\q) = T_L(\p) + H_1(\w,\q)
\label{eq: barH}
\end{equation}
where $H_1(\w,\q) = U_L(\q) + H_m(\rho_1(\w),\q)$.


\subsection{Realization of \texorpdfstring{$P$}{P}}
\begin{definition}
A \emph{realization} of a Poisson manifold $(P, \{\cdot,\cdot\})$ is a symplectic manifold $(N, \omega_N)$ together with a Poisson map $\psi \from N\to P$.
The realization is called \emph{full} if it is surjective and \emph{canonical} if $N \simeq T^\ast \RR^d$ for some $d$.
\end{definition}

A realization of $P=(\RR^3)^n \times T^\ast V$ is obtained by using the Hopf fibration map for each copy of $\RR^3$.

\begin{proposition}
Let
$N=\CC^{2n} \times T^\ast \RR^m$, equipped with the canonical symplectic structure.
We write a point in $N$ as $(\z_1, \z_2, \p,\q)$ where
\[
\z_{i} = (z_{i}^1, z_{i}^2, \dotsc, z_{i}^n), \qquad \text{for }i=1,2
\]
and $(\p,\q)\in T^\ast \RR^m$.

Let a map $\psi\from N\to P$ be defined by
\[
\begin{aligned}
\psi(\z_1, \z_2, \p_{\q}) &= (\mathbf{J}(\z_1, \z_2), \p, \q)\\
                          &= \left(J(z_1^1, z_2^1), \dotsc, J(z_1^n, z_2^n),
                            \p, \q\right),
\end{aligned}
\]
where
$J\from \CC \times\CC\to \RR^3$ is the Hopf fibration map
\begin{equation}
J(z_1,z_2) = \frac{1}{4}\begin{pmatrix}
                  2\Re(z_1z_2^\ast)\\
                  2\Im(z_1z_2^\ast)\\
                  \left(|z_1|^2-|z_2|^2\right)
                  \end{pmatrix}.
\label{eq: Hopf}
\end{equation}
\label{prop: realization}

$(N, \psi)$ is a full, canonical realization of $P$.
\end{proposition}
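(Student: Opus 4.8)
The plan is to verify the two defining properties separately: that $N$ is a canonical symplectic manifold, and that $\psi$ is a surjective Poisson map. Canonicity is immediate, since $\CC^{2n}$ with its canonical symplectic form is symplectomorphic to $T^\ast \RR^{2n}$, so that $N \simeq T^\ast \RR^{2n} \times T^\ast \RR^m \simeq T^\ast \RR^{2n+m}$; thus we may take $d = 2n+m$. It remains to check the Poisson and surjectivity properties, and the former is where the real work lies.

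To show $\psi$ is Poisson, I would exploit the product structure on both sides. The bracket \eqref{eq: PoissP} on $P$ is a sum of the canonical bracket on the $T^\ast V$ factor and the bracket $\kappa$ on $\R = (\RR^3)^n$, and the latter is itself a sum of $n$ independent copies of the Lie--Poisson bracket on $\RR^3$. The map $\psi$ respects this decomposition: it acts as the identity on the $(\p,\q)$ coordinates and componentwise through the single-sphere Hopf map $J$ on the $\CC^2$ blocks. Since both the canonical bracket on $N$ and the pullback of the bracket on $P$ are biderivations in each argument, they agree globally once they agree on coordinate functions; the identity and the $(\p,\q)$ coordinates contribute nothing new, so everything reduces to a single copy. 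Concretely, I must show that the three components $J^a$ of the Hopf map \eqref{eq: Hopf} satisfy $\{J^a, J^b\}_N = \sum_c \epsilon_{abc} J^c$, matching the Lie--Poisson relations $\{w^a, w^b\}_S = \sum_c \epsilon_{abc} w^c$ coming from $\kappa$.

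This last identity is a direct computation. Writing $z_j = q_j + \mathrm{i} p_j$ for $j=1,2$ puts the canonical bracket on $\CC^2$ in the form $\{z_j, \bar z_k\}_N = -2\mathrm{i}\,\delta_{jk}$, $\{z_j, z_k\}_N = 0$, after which the brackets $\{J^a, J^b\}_N$ are evaluated by the Leibniz rule directly from \eqref{eq: Hopf}. The map $J$ is, up to normalization, the standard momentum map for the $\mathrm{SU}(2)$-action on $\CC^2$ onto $\mathfrak{su}(2)^\ast \simeq \RR^3$, so it is guaranteed to close into the $\RR^3$ Lie--Poisson structure; the one genuine point of care is that the normalization $\tfrac14$ in \eqref{eq: Hopf} matches the convention in $\kappa$, and I expect checking this constant to be the main (if modest) obstacle.

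Finally, for surjectivity I would note that $\psi$ is onto precisely when $J\from\CC^2\to\RR^3$ is onto, since $\psi$ is the identity on $T^\ast V$ and acts componentwise on the spheres. A short computation from \eqref{eq: Hopf} gives $\langle J(z_1,z_2), J(z_1,z_2)\rangle = \tfrac{1}{16}(|z_1|^2+|z_2|^2)^2$, so the image of $J$ meets every sphere of every radius, while the origin is attained at $z_1=z_2=0$; combined with the surjectivity of the Hopf fibration onto each fixed sphere, this shows that $J$, and hence $\psi$, is surjective. Together with canonicity, this establishes that $(N,\psi)$ is a full, canonical realization of $P$.
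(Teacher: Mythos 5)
Your argument is correct and, at the structural level, parallels the paper's: both proofs use the product decomposition of $\psi$ into $n$ copies of the Hopf map times the identity on $T^\ast V$, together with the fact that direct products of Poisson maps are Poisson, to reduce everything to the single claim that $J\from\CC^2\to\RR^3$ is a Poisson map. The difference is in how that claim is handled: the paper simply cites it (to \cite{MR09}), whereas you verify it directly by checking the Lie--Poisson relations $\{J^a,J^b\}=\sum_c\epsilon_{abc}J^c$ on the components of $J$ --- a legitimate reduction, since both brackets are biderivations and hence determined by their values on coordinate functions (the cross-block and $(\p,\q)$ brackets vanishing trivially). This buys self-containedness at the cost of a short computation, and the computation does go through: the factor $\tfrac14$ in \eqref{eq: Hopf} is exactly what makes the bracket close with coefficient $1$ (with the more common normalization $\tfrac12$ it would close onto $2J^c$). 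One convention to pin down in the write-up: with $z=q+\mathrm{i}p$ and $\{q,p\}=1$ one actually obtains $\{J^1,J^2\}=-J^3$, so the sign of the structure constants depends on the orientation chosen for $\CC\simeq T^\ast\RR$; this is part of the same ``check the constants'' step you already flag. Finally, you treat surjectivity and canonicity explicitly (via the identity $\langle J(z_1,z_2),J(z_1,z_2)\rangle=\tfrac{1}{16}\bigl(|z_1|^2+|z_2|^2\bigr)^2$ and surjectivity of the Hopf fibration onto each sphere), points the paper's one-line proof leaves implicit.
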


\begin{proof} Direct products of Poisson maps are Poisson. $\psi$ is the product of $n$ copies of the Hopf fibration and the identity map on $T^\ast V$. Finally, the Hopf fibration is Poisson \cite{MR09}.
\end{proof}

The pull-back of the Hamiltonian \eqref{eq: barH} has the form
\begin{equation}
\begin{aligned}
\bar{H}\circ \psi(\z,\p,\q) &= T_L(\p) + H_2(\z, \q)\\
&= T_L(\p) + H_1(\mathbf{J}(\z), \q)
\end{aligned}
\label{eq: pullbackH}
\end{equation}

\section{Collective Symplectic integrators}
McLachlan, Modin and Verdier introduced \emph{Collective symplectic integrators} for integration of Poisson systems.
The main idea is to utilize a realization $(N, \psi)$ of the Poisson manifold $P$ and integrate the vector field of the pulled-back Hamiltonian $H\circ \psi$ on $N$, with a symplectic method.
We will denote this vector field by $X_{H\circ \psi}$.

To obtain a symplectic integrator on $P$ (i.e. Poisson and preserves leaves), it
is necessary that the update maps of the integrator maps fibers (of $\psi$) to fibers, and that the preimages of leaves are preserved.

In our case $N=\CC^{2n}\times T^\ast \RR^m \simeq T^\ast \RR^{2n} \times T^\ast \RR^m$.
We use coordinates $(\z,\p,\q)$ on $N$ and assume we use a PRK method
partitioned into these coordinates (i.e. each of the components $\z,\p,\q$ are
integrated with (possibly different) RK methods.)
\begin{lemma}
A PRK method, when applied to a lifted vector field $X_{H\circ \psi},$ maps fibers to fibers.
\label{lem: fibers}
\end{lemma}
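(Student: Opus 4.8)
The plan is to realize the fibers of $\psi$ as the orbits of a linear symplectic group action under which both the lifted vector field and the PRK method are equivariant. First I would identify the fibers. For the Hopf map \eqref{eq: Hopf} a direct computation gives $J(e^{i\theta}z_1, e^{i\theta}z_2)=J(z_1,z_2)$, since $z_1z_2^\ast$ and $|z_1|^2-|z_2|^2$ are unchanged by a common phase, and conversely two points with the same image differ by such a phase. Hence the fibers of the product map $\mathbf{J}$, and therefore of $\psi$, are exactly the orbits of the torus $G=U(1)^n$ acting on $N$ by
\[
R_{\boldsymbol\theta}(\z_1,\z_2,\p,\q)=\bigl((e^{i\theta_i}z_1^i)_i,\,(e^{i\theta_i}z_2^i)_i,\,\p,\q\bigr),\qquad \boldsymbol\theta=(\theta_1,\dotsc,\theta_n),
\]
and trivially on the $T^\ast\RR^m$ factor. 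Each $R_{\boldsymbol\theta}$ is $\CC$-linear on the $\z$ block; viewing $\CC^2\simeq T^\ast\RR^2$, it is a rotation in each symplectic plane and hence a linear symplectomorphism of $N$.

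Next I would show that $X_{H\circ\psi}$ is $G$-equivariant. By \eqref{eq: pullbackH} the pulled-back Hamiltonian is $H\circ\psi(\z,\p,\q)=T_L(\p)+H_1(\mathbf{J}(\z),\q)$, which depends on $\z$ only through $\mathbf{J}(\z)$. Since $\mathbf{J}\circ R_{\boldsymbol\theta}=\mathbf{J}$ and $R_{\boldsymbol\theta}$ acts trivially on $\p$, the Hamiltonian is $G$-invariant, $H\circ\psi\circ R_{\boldsymbol\theta}=H\circ\psi$. Because each $R_{\boldsymbol\theta}$ is symplectic, a symplectomorphism pushes a Hamiltonian vector field to the Hamiltonian vector field of the correspondingly pulled-back Hamiltonian, so $(R_{\boldsymbol\theta})_\ast X_{H\circ\psi}=X_{H\circ\psi}$. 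As $R_{\boldsymbol\theta}$ is linear, $DR_{\boldsymbol\theta}=R_{\boldsymbol\theta}$ and this reads $X_{H\circ\psi}(R_{\boldsymbol\theta}x)=R_{\boldsymbol\theta}\,X_{H\circ\psi}(x)$ for all $x\in N$.

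Finally I would invoke the equivariance of Runge--Kutta methods under linear maps. The observation is that $R_{\boldsymbol\theta}$ is block diagonal with respect to the $(\z,\p,\q)$ partition: it acts linearly on the $\z$ block and as the identity on the $\p$ and $\q$ blocks. Substituting the rotated initial value $R_{\boldsymbol\theta}x$ into the PRK stage and update equations, the $G$-equivariance of $X_{H\circ\psi}$ shows that the rotated stages $R_{\boldsymbol\theta}Y_i$ solve the same stage equations, and linearity of $R_{\boldsymbol\theta}$ lets one factor it out of the Runge--Kutta sums in each block. This yields $\Phi_h(R_{\boldsymbol\theta}x)=R_{\boldsymbol\theta}\Phi_h(x)$ for the one-step map $\Phi_h$, i.e.\ $\Phi_h$ is $G$-equivariant. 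Consequently $\Phi_h$ maps each $G$-orbit into a $G$-orbit, which is precisely the assertion that fibers are mapped to fibers.

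The main obstacle is the compatibility of the symmetry with the partition. The action $R_{\boldsymbol\theta}$ mixes the real and imaginary parts of $\z$ (it is a genuine rotation, not a componentwise rescaling), so the factoring-out step above requires that the \emph{entire} $\z$ block be integrated by a single Runge--Kutta tableau; were the real and imaginary parts assigned different coefficients, the rotated stages would no longer satisfy the stage equations and equivariance would fail. Checking that the assumed partition into $(\z,\p,\q)$ respects the block structure of $R_{\boldsymbol\theta}$ is therefore the crux of the argument, with the remaining steps being formal consequences of linear equivariance.
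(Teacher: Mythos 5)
Your proposal is correct and follows essentially the same route as the paper's proof: exhibit the fibers of $\psi$ as orbits of the linear symplectic $U(1)^{n}$ action on the $\z$-block, deduce invariance of the pulled-back Hamiltonian and hence equivariance of the lifted vector field, and then use equivariance of the PRK update map under linear symmetries compatible with the partition. You spell out the intermediate steps (fiber identification, push-forward of Hamiltonian vector fields, factoring the linear action out of the stage equations) more explicitly than the paper does, but the argument is the same.
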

\begin{proof}
The group $U(1)^{\times n}$ acts on $N$ with the following action.
Write an  element in $U(1)^{\times n}$ as
\[e^{i\bm{\theta}} = (e^{i\theta_1}, e^{i\theta_2}, \dotsc, e^{i\theta_n})
\]
where $\theta_k\in [0, 2\pi]$. The action is given by
\[
e^{i\bm{\theta}}\cdot(\z_1,\z_2,\p,\q) =
(e^{i\bm{\theta}}\odot \z_1, e^{i\bm{\theta}}\odot \z_2, \p, \q).
\]
This action is linear and symplectic. Furthermore, it preserves fibers and is transitive on each fiber.
As the action is symplectic and preserves fibers, it is a symmetry of the lifted vector field.
Since the action is also linear, and only affects one of the components, the
$\z$-component, this symmetry is preserved by the partitioned Runge--Kutta
method, and is also a symmetry of the update map.
Since the action is transitive on each fiber, we can conclude that the update map maps fibers to fibers.
\end{proof}

\begin{lemma}
A PRK method, where the $\z$-component is integrated with a symplectic RK-method, preserves the preimages of leaves.
\label{lem: leaves}
\end{lemma}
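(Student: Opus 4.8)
The plan is to exhibit the preimage of each symplectic leaf as a common level set of functions that are \emph{quadratic in the $\z$-variables}, and then to use the fact that a symplectic Runge--Kutta method conserves quadratic first integrals of the block it integrates. To make the leaves explicit, note that $\{\cdot,\cdot\}_S$ is the Lie--Poisson bracket on $n$ copies of $\mathfrak{so}(3)^\ast\cong\RR^3$, whose Casimirs are the norms $\norm{\w_j}$; since $T^\ast V$ is symplectic, the symplectic leaves of $P$ are exactly the sets $T^\ast V\times\prod_j\{\norm{\w_j}=r_j\}$. A short computation from \eqref{eq: Hopf} gives
\[
\norm{J(z_1,z_2)}^2=\frac{1}{16}\left(4|z_1|^2|z_2|^2+(|z_1|^2-|z_2|^2)^2\right)=\frac{1}{16}\left(|z_1|^2+|z_2|^2\right)^2,
\]
so that $\norm{\mathbf{J}(\z)_j}=\tfrac14(|z_1^j|^2+|z_2^j|^2)$. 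Hence, setting $C_j(\z)=|z_1^j|^2+|z_2^j|^2$, the preimage under $\psi$ of the leaf $\{\norm{\w_j}=r_j\}$ is the common level set $\{C_j=4r_j\}$ (times all of $T^\ast\RR^m$), and it suffices to show that the update map conserves every $C_j$ exactly.

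Next I would check that each $C_j$ is a genuine first integral of $X_{H\circ\psi}$. Since $C_j=4\,\norm{\w_j}\circ\psi$ is the pullback of a Casimir and $\psi$ is Poisson, it Poisson-commutes on $N$ with every pulled-back function, in particular with $H\circ\psi$; thus $\{C_j,H\circ\psi\}_N=0$ \emph{identically} on $N$. As $C_j$ depends only on $\z$, this says $\nabla_\z C_j\cdot(X_{H\circ\psi})_\z=0$ pointwise, for every value of $(\p,\q)$.

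The decisive, and essentially classical, step is the quadratic-invariant computation. In the real coordinates on $N\simeq T^\ast\RR^{2n}\times T^\ast\RR^m$ each $C_j(\z)=\z^\tran S_j\z$ is a real quadratic form supported on the $j$-th coordinate pair. Running a single step of the $\z$-method, with internal stages $Z_i=\z_0+h\sum_\ell a_{i\ell}k_\ell$, output $\z_1=\z_0+h\sum_i b_i k_i$, and stage derivatives $k_i=(X_{H\circ\psi})_\z(Z_i,\dots)$, one obtains
\[
\z_1^\tran S_j\z_1-\z_0^\tran S_j\z_0=h^2\sum_{i,\ell}\bigl(b_ib_\ell-b_ia_{i\ell}-b_\ell a_{\ell i}\bigr)\,k_i^\tran S_j k_\ell,
\]
where the stage terms $Z_i^\tran S_j k_i=\tfrac12\nabla_\z C_j(Z_i)\cdot k_i$ vanish by the identity of the previous paragraph applied at each stage value. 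The symplecticity condition $b_ia_{i\ell}+b_\ell a_{\ell i}=b_ib_\ell$ on the $\z$-method annihilates the right-hand side, so each $C_j$, and therefore the preimage of every leaf, is preserved exactly.

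The main obstacle is conceptual bookkeeping rather than calculation. The cancellation at the intermediate stages requires $C_j$ to be conserved on \emph{all} of $N$, not merely along a single leaf, which is precisely why I lean on the Casimir identity holding identically; and it is essential that $C_j$ involves only the $\z$-block, integrated by one RK method, so that no cross-partition matching of the coefficients is needed. The coupling of the $\z$-field to $\q$ through $H_2$ enters the $k_i$ but never the cancellation, so it is harmless; had the invariant mixed $\z$ with $\p$ or $\q$, one would instead need the compatibility conditions between the partitions, which this construction deliberately avoids.
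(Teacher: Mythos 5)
Your proof is correct and takes essentially the same route as the paper: the preimages of the leaves are the level sets of the quadratic functions $|z_1^j|^2+|z_2^j|^2$, which depend only on $\z$, are first integrals of the lifted vector field (you justify this identically on $N$ via the Casimir/Poisson-map argument, a point the paper leaves implicit), and are therefore conserved exactly by the symplectic $\z$-method. As a minor aside, with the normalization in \eqref{eq: Hopf} your level value $4r_j$ is the correct one, whereas the paper writes $2r_j$; this does not affect the argument.
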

\begin{proof}
The symplectic leaves in $P$ are given by $\norm{\w_j}=r_j$, for each $j$.
By properties of the Hopf map \eqref{eq: Hopf}, the preimages in $N$ are given by
\[
|z_1^j|^2 + |z_2^j|^2 = 2r_j
\]
for each $j$, and are invariant under the flow of the lifted vector field.
As quadratic invariants, depending only on $\z$, these are preserved by the PRK
methods if the $\z$-method is symplectic.
\end{proof}

We are almost ready to state the main theorem, except that we need a result on
the symplecticity of Partitioned Runge Kutta methods with three components,
where one component, $\z$, corresponds to a symplectic space, and the two remaining
components, $\p, \q$, correspond to Darboux coordinates of another symplectic space.

The symplecticity of such partitioned methods is interesting in its own right, and the proof of this is presented in Section \ref{sec: NSP}.

\begin{theorem}
Assume the system on $N$ is integrated with a partitioned Runge--Kutta method, where
\begin{itemize}
\item $\z$ is integrated with a symplectic Runge--Kutta method.
\item $(\p, \q)$ is integrated with a symplectic partitioned Runge--Kutta method and
\item The $b$-coefficients of the two methods above coincide.
\end{itemize}
Then the resulting integrator is symplectic. Furthermore, it descends to a symplectic method on $P$, and the descended method restricts to a symplectic method on $M$.
\end{theorem}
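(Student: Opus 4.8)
\emph{Proof plan.} The strategy is to prove the three assertions in turn, using the chain $M\subset P$ together with the realization $\psi\from N\to P$, and feeding in the two lemmas just established. Symplecticity on the top space $N$ is the analytic input; the descent to $P$ and the restriction to $M$ are then obtained by combining that symplecticity with the fiber- and leaf-preservation already proved.

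First I would settle symplecticity on $N$. Since $N=\CC^{2n}\times T^\ast\RR^m\simeq T^\ast\RR^{2n}\times T^\ast\RR^m$ carries the product canonical symplectic form, and the pulled-back Hamiltonian \eqref{eq: pullbackH} is integrated by a partitioned method whose blocks are exactly ($\z$ with a symplectic RK method, $(\p,\q)$ with a symplectic PRK method, and common $b$-coefficients), this is precisely the three-component configuration announced before the theorem. I would therefore invoke the symplecticity criterion for such methods proved in Section~\ref{sec: NSP}: the individual symplecticity conditions control the diagonal blocks, while the shared $b$-coefficients force the mixed contributions coupling the $\z$-block to the $(\p,\q)$-block to cancel, so that the one-step map $\Phi_h\from N\to N$ is a symplectomorphism.

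Next I would descend $\Phi_h$ to $P$. By Lemma~\ref{lem: fibers} the map $\Phi_h$ sends each fiber of $\psi$ into a single fiber, so $\psi\circ\Phi_h$ is constant on fibers; since $\psi$ is onto, this yields a unique $\bar\Phi_h\from P\to P$ with $\psi\circ\Phi_h=\bar\Phi_h\circ\psi$. To check $\bar\Phi_h$ is Poisson, I take $f,g\in C^\infty(P)$ and compute the left side pulled back along $\psi$, using in succession that $\psi$ is Poisson, the intertwining relation $\bar\Phi_h\circ\psi=\psi\circ\Phi_h$, that $\Phi_h$ is symplectic (hence Poisson on $N$), and once more that $\psi$ is Poisson. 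This gives
\[
\{f\circ\bar\Phi_h,\,g\circ\bar\Phi_h\}_P\circ\psi=\bigl(\{f,g\}_P\circ\bar\Phi_h\bigr)\circ\psi .
\]
Surjectivity of $\psi$ then permits cancelling $\psi$ on the right, establishing the Poisson identity for $\bar\Phi_h$.

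Finally, restriction to $M$. By Lemma~\ref{lem: leaves} the map $\Phi_h$ preserves the preimages of the leaves, so applying $\psi$ and the intertwining relation shows that $\bar\Phi_h$ maps each symplectic leaf of $P$ into itself; in particular $\bar\Phi_h(M)\subseteq M$, as $M$ is a leaf by the Proposition (and, applying the same argument to $\bar\Phi_h^{-1}$, in fact $\bar\Phi_h(M)=M$). A Poisson diffeomorphism preserving a symplectic leaf restricts to a symplectomorphism of that leaf, since the leaf's symplectic form is induced by the ambient Poisson structure; hence $\bar\Phi_h|_M$ is symplectic on $(M,\omega)$. The one genuinely substantial step is the symplecticity on $N$, which is why it is isolated in Section~\ref{sec: NSP}; granting that, the remainder is assembly, the only delicate point being the use of the surjectivity of $\psi$ to transfer the Poisson identity from $N$ down to $P$.
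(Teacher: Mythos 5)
Your proposal is correct and follows essentially the same route as the paper: the paper reduces the theorem to the same three conditions --- symplecticity of the upstairs integrator (from Theorem~\ref{thm: PRK}), fiber preservation (Lemma~\ref{lem: fibers}), and preservation of leaf preimages (Lemma~\ref{lem: leaves}) --- and cites \cite{MMV17} for the fact that these suffice. The only difference is that you explicitly carry out the descent and restriction arguments (constructing $\bar\Phi_h$ via surjectivity of $\psi$ and checking the Poisson and leaf properties), which the paper delegates to the citation; your version is a correct filling-in of that step.
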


\begin{proof}
Sufficient conditions are that the ``upstairs'' integrator on $M$ \cite{MMV17}
\begin{enumerate}[(i)]
\item is symplectic.
\item maps fibers to fibers.
\item preserves preimages of leaves in $P$.
\end{enumerate}

(i) follows from Theorem \ref{thm: PRK} and the remarks following.
(ii) is  Lemma \ref{lem: fibers}.
(iii) is Lemma \ref{lem: leaves}.
\end{proof}

The method used for the numerical tests is a partitioned method where the
$\z$-variable is integrated with the implicit midpoint method and the
$(\p,\q)$-variable are integrated with the St\"{o}rmer--Verlet Scheme. As the
midpoint method is a one-stage method, and the St\"{o}rmer--Verlet method has
two stages, it is necessary to use the reducible two stage method with Butcher tableau
\[\arraycolsep=1.4pt\def\arraystretch{1.5}
\begin{array}{c|cc}
\frac 12 & \frac 14 & \frac 14 \\
\frac 12 & \frac 14 & \frac 14 \\
\hline
        & \frac 12 & \frac 12
\end{array}\]
for the $\z$-coordinate.

In the following equations describing the integrators, we will use $\p,\q,$etc.
for the values before a step of the integrator, and $\tilde{\p},
\tilde{\q},$etc. for the values after a step of the integrator.

The partitioned integrator, applied to the Hamiltonian \eqref{eq: pullbackH} on $N$  can, after identifications, be written:
\begin{equation}
\begin{aligned}
\PP &= \p - \frac{h}{2} \frac{\partial H_2}{\partial \q}(\q, \ZZ)\\
\ZZ &= \z + \frac{h}{4} \mat{J}_{\z}^{-1} \frac{\partial H_2}{\partial \z}(\q, \ZZ)+ \frac{h}{4} \mat{J}_{\z}^{-1} \frac{\partial H_2}{\partial \z}(\tilde{\q}, \ZZ)\\
\tilde{\p} &= \PP - \frac{h}{2} \frac{\partial H_2}{\partial \q}(\tilde{\q}, \ZZ)\\
\tilde{\q} &= \q+h \mat{M}^{-1} \PP\\
\tilde{\z} &= \z + \frac{h}{2} \mat{J}_{\z}^{-1} \frac{\partial H_2}{\partial \z}(\q, \ZZ)+ \frac{h}{2} \mat{J}_{\z}^{-1} \frac{\partial H_2}{\partial \z}(\tilde{\q}, \ZZ).
\end{aligned}
\label{eq: scheme3}
\end{equation}

The $\z$-variable is integrated with the midpoint method.
As shown in \cite{MMV16}, this integrator coincides with the spherical midpoint method for ray-constant vector fields (As we have here, cf.\eqref{eq: barH})

We can thus write the scheme as
\begin{equation}
\begin{aligned}
\PP &= \p - \frac{h}{2} \frac{\partial H_1}{\partial \q}(\q, \WW)\\
\WW &= \rho_1(\w+ \tilde{\w})\\
\tilde{\p} &= \PP - \frac{h}{2} \frac{\partial H_1}{\partial \q}(\tilde{\q}, \WW)\\
\tilde{\q} &= \q+h \mat{M}^{-1} \PP\\
\tilde{\w} &= \w + \frac{h}{2} \left[\WW, \frac{\partial H_1}{\partial \w}(\q, \WW)+ \frac{\partial H_1}{\partial \w}(\tilde{\q}, \WW)\right]
\end{aligned}
\label{eq: scheme4}
\end{equation}

\section{Nonstandard Symplectic partitioned Runge--Kutta methods}
\label{sec: NSP}
For the collective integrators proposed, we integrate a Hamiltonian system where the space is partitioned into a product of two symplectic spaces. We therefore need to establish when a partitioned Runge--Kutta method is symplectic for this partitioning.\footnote{Standard symplectic PRK-methods partition into position and momentum variables}.

The conditions are not specific to our application and are here presented in a more general setting.

We first consider the case when a symplectic manifold is partitioned into $N$ symplectic spaces, and each component is integrated with a (nonpartitioned) Runge-Kutta method.

Consider an ordinary differential equation of the form
\begin{equation}
\begin{aligned}
\frac{\ud y^k}{\ud t} &= f^k(\y), \qquad k=1, \dotsc, N\\
y^k &\in \RR^{n_k}
\end{aligned}
\label{eq: ode}
\end{equation}
where
\[
\y= \begin{bmatrix} y_1 \\ \vdots \\ y_N\end{bmatrix}
\]

Equation \eqref{eq: ode} can be numerically integrated by a partitioned Runge--Kutta method with coefficients $b^k_i, a^{k}_{ij}$, where $k=1, \dotsc, N$ and $i,j=1,\dotsc s$.
When writing down the scheme, we write
\[
\tilde{\y}= \begin{bmatrix} \tilde{y}_1 \\ \vdots \\ \tilde{y}_N\end{bmatrix}
\]
for the updated variables, and
\[
\mathbf{Y}_i= \begin{bmatrix}  Y_1^k\\ \vdots \\ Y_N^k\end{bmatrix}, \qquad\text{$i=1, \dotsc,s$}
\]
for the stage values.
\begin{equation}
\begin{aligned}
\tilde{y}^k&= y^k + h\sum_{i=1}^s b^k_i F^{k}_i & k&=1,\dotsc,N\\
Y_i^k &= y^k + h\sum_{j=1}^s a^k_{ij} F^{k}_j & k&=1,\dotsc,N, \quad i=1,\dotsc,s\\
F^{k}_i &= f^k(\mathbf{Y}_i) & i=1,\dotsc,s
\end{aligned}
\label{eq: scheme}
\end{equation}

We are interested in which of these schemes preserve symplectic forms of the type
\[
\omega= \sum_{k=1}^N \omega_k = \sum_{k=1}^N \ud y^k\wedge  \mat{J_k} \ud y^k.
\]
where each $\mat{J_k}$ is a $n_k \times n_k$ skew-symmetric, nonsingular matrix

A standard application of the variational equation (see e.g. \cite[Chapter VI.4]{HLW}) shows that for preserving symplectic forms of the above type it is sufficient that the integrator preserves all first integrals of the form
\[I(\y) = \sum_{k=1}^N I_k(y_k) = \sum_{k=1}^N B^k(y^k, y^k),\]
where each $B^k$ is a symmetric bilinear function.

\begin{theorem}
If the coefficients satisfy that
\begin{enumerate}[(i)]
\item $b^k_ib^k_j = b^k_i a^k_{ij} + b^k_j a_{ji}$ \qquad for all $i,j,k$ and
\item $b^1_i=b^2_i= \dotsb = b^k_i$ \qquad for all $i$.
\end{enumerate}
then the scheme preserves all invariants of the form $I(\y)=\sum_{k=1}^N
B^k(\y^k, \y^k)$.
\label{thm: RK1}
\end{theorem}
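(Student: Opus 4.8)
The plan is to follow the classical quadratic-invariant argument for Runge--Kutta methods (as in \cite[Ch.~IV.2]{HLW}), but carried out block-by-block so that the exact role of each hypothesis becomes visible. First I would make explicit the hypothesis hidden in the word ``invariants'': I take $I$ to be a genuine first integral of \eqref{eq: ode}, that is, $\sum_k B^k(y^k, f^k(\y)) = 0$ for every $\y$. This is precisely the quantity produced by the variational-equation reduction cited just above the statement, and differentiating $I$ along a solution shows it is the correct conservation condition. The goal is then to verify $I(\tilde\y) = I(\y)$ for a single step of \eqref{eq: scheme}.

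Next I would substitute the update $\tilde y^k = y^k + h\sum_i b^k_i F^k_i$ into $B^k(\tilde y^k, \tilde y^k)$ and expand using bilinearity and symmetry of $B^k$. This produces a term linear in $h$, namely $2h\sum_i b^k_i B^k(y^k, F^k_i)$, together with a quadratic term $h^2\sum_{i,j} b^k_i b^k_j B^k(F^k_i, F^k_j)$. Using the stage relation $Y^k_i = y^k + h\sum_j a^k_{ij} F^k_j$ I would replace $y^k$ by $Y^k_i - h\sum_j a^k_{ij} F^k_j$ inside the linear term, splitting it into the clean piece $2h\sum_i b^k_i B^k(Y^k_i, F^k_i)$ plus an $\bigO(h^2)$ remainder $-2h^2\sum_{i,j} b^k_i a^k_{ij} B^k(F^k_j, F^k_i)$.

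The decisive algebraic step is to show that every $h^2$ contribution cancels inside each block $k$. Collecting the two quadratic sums and using the symmetry of $B^k$ together with the relabelling $i\leftrightarrow j$, they combine into $h^2\sum_{i,j}\big(b^k_i b^k_j - b^k_i a^k_{ij} - b^k_j a^k_{ji}\big) B^k(F^k_i, F^k_j)$, which vanishes termwise by hypothesis~(i). This leaves the per-block identity $B^k(\tilde y^k, \tilde y^k) - B^k(y^k, y^k) = 2h\sum_i b^k_i B^k(Y^k_i, F^k_i)$, which rests only on condition~(i).

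Finally I would sum over $k$ and interchange the order of summation, writing $I(\tilde\y) - I(\y) = 2h\sum_i\big(\sum_k b^k_i B^k(Y^k_i, F^k_i)\big)$. Here hypothesis~(ii) is essential, and I expect it to be the conceptual crux rather than the bookkeeping: because the weights coincide across blocks, $b^k_i = b_i$, the common factor pulls out of the inner sum, giving $2h\sum_i b_i \sum_k B^k(Y^k_i, f^k(\mathbf{Y}_i))$. Recognising the inner sum as the first-integral condition evaluated at the stage vector $\mathbf{Y}_i$, it vanishes for every $i$, so the step conserves $I$. The main obstacle is isolating exactly where (ii) is needed: without equal weights the residual $2h\sum_i\sum_k b^k_i B^k(Y^k_i, F^k_i)$ cannot be recombined into the first integral of the \emph{full} system, and the coupling between the separately integrated blocks would fail to be respected.
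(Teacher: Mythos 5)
Your proposal is correct and follows essentially the same route as the paper's own proof: expand $I(\tilde\y)$ by bilinearity, substitute $y^k = Y^k_i - h\sum_j a^k_{ij}F^k_j$ into the $\bigO(h)$ term, cancel the $\bigO(h^2)$ contributions blockwise via condition (i), and use condition (ii) to pull the common weights out so the remaining sum reduces to the first-integral identity $\sum_k B^k(Y^k_i, F^k_i)=0$ at each stage. Your explicit isolation of where each hypothesis enters matches the paper's argument exactly, so no changes are needed.
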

Another way of stating the assumption in the theorem is that each of the Runge--Kutta methods is symplectic in their own right, and their $b$-values all have to agree.

\begin{proof}
Since $I$ is a first integral, it holds that $\sum_k B^k(y^k, f^k(\y))=0$ for all $\y$. Specifically,
\begin{equation}
\sum_k B^k(Y^k_i, F^k_i)=0
\label{eq: Bid}
\end{equation}

Inserting \eqref{eq: scheme} into
\[I(\tilde{y}) = \sum_k B^k(\tilde{y}_k, \tilde{y}_k)
\]
we get
\[
\begin{aligned}
I(\tilde{\y}) =& \sum_k B^k\left(y^k +h\sum_i b^k_i F^k_i, y^k+h\sum_j b^k_j F^k_j\right)\\
            =&I(\y) + h \sum_{j}\sum_k b^k_j B^k\left(y^k,  F^k_j\right) + h \sum_{i}\sum_k b^k_i B^k\left(F^k_i, y^k\right)\\
            &+ h^2 \sum_{i,j}\sum_k b^k_ib^k_j B^k\left( F^k_i, F^k_j\right)\\
\end{aligned}
\]
The trick now is to substitute $y_k=Y^k_i-h\sum_ja^k_{ij} F^k_j$ to get matching terms.
\[\begin{aligned}
I(\tilde{\y}) =& I(\y) + h\sum_j\sum_k b^k_j B^k\left(Y^k_j, F^k_j\right) + h\sum_{i}\sum_{k} b^k_i B^k\left(F^k_i, Y^k_i\right)\\
            &+ h^2 \sum_{i,j}\sum_k (b^k_i b^k_j -b^k_ja^k_{ji} -
            b^k_ia^k_{ij}) B^k\left(F^k_i, F^k_j\right)\\
            =&I(\y) + 2h\sum_{i}\sum_{k} b^k_i B^k\left(Y^k_i, F^k_i\right)\\
            &+ h^2 \sum_{i,j}\sum_k (b^k_i b^k_j -b^k_ja^k_{ji} -
            b^k_ia^k_{ij}) B^k\left(F^k_i, F^k_j\right).
            \end{aligned}
\]
We see that under the assumption $b^k_i b^k_j -b^k_ja^k_{ji} - b^k_ia^k_{ij}=0$,
the $\mathcal{O}(h^2)$ term disappears. For the $\mathcal{O}(h)$ term, we see
that if $b^k_i=b_i$ is constant in $k$, then
\[
2h\sum_{i}\sum_{k} b^k_i B^k\left(Y^k_i, F^k_i\right)= 2h\sum_i b_i = \sum_k B^k\left( Y^k_i, F^k_i \right)=0,
\]
due to \eqref{eq: Bid}
\end{proof}

Now, let equation \eqref{eq: ode} be a Hamiltonian system given by $H(y, z)$ of the form
\begin{equation}
\frac{\ud y^k}{\ud t} = \mat{J}_k^{-1} \frac{\partial H}{\partial {y_k}}\\
\label{eq: Ham}
\end{equation}
where $\mat{J}_k$ are non-singular, skew-symmetric matrices.

Applying the above Theorem to the variational equation yields the following corollary
\begin{corollary}
If the coefficients satisfy that
\begin{enumerate}[(i)]
\item $b^k_ib^k_j = b^k_i a^k_{ij} + b^k_j a_{ji}$ \qquad for all $i,j,k$ and
\item $b^1_i=b^2_i= \dotsb = b^N_i$ \qquad for all $i$.
\end{enumerate}
Then the scheme is symplectic when applied to the system \eqref{eq: Ham}
\end{corollary}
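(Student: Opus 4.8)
The plan is to apply Theorem~\ref{thm: RK1} to the \emph{variational equation} of the Hamiltonian system \eqref{eq: Ham}, exploiting the classical equivalence (see \cite[Chapter~VI.4]{HLW}) between symplecticity of the numerical flow and preservation of the symplectic pairing of tangent variations. Recall that the form to be preserved is $\omega = \sum_k \ud y^k \wedge \mat{J}_k \ud y^k$, and that a one-step map is symplectic precisely when, for any two solutions $\psi,\phi$ of the variational equation, the bilinear quantity $\omega(\psi,\phi) = \sum_k (\psi^k)^\tran \mat{J}_k \phi^k$ is conserved by the method.

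First I would write down the variational equation of \eqref{eq: Ham}. Differentiating $\dot y^k = \mat{J}_k^{-1}\,\partial H/\partial y^k$ along a solution, a variation $\psi=(\psi^1,\dotsc,\psi^N)$ obeys the linear system $\dot\psi^k = \mat{J}_k^{-1} \sum_l \frac{\partial^2 H}{\partial y^k\partial y^l}\psi^l$, which is partitioned into the \emph{same} $N$ blocks as \eqref{eq: ode}. A short computation using the symmetry of the Hessian together with the skew-symmetry of each $\mat{J}_k$ shows that $\frac{\ud}{\ud t}\omega(\psi,\phi)=0$, so $\omega(\psi,\phi)$ is a first integral of the combined flow on the enlarged state $(\y,\psi,\phi)$.

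Next I would combine the two variations into a single state $u^k=(\psi^k,\phi^k)$ and write $\omega(\psi,\phi) = \sum_k B^k(u^k,u^k)$ with the symmetric bilinear form $B^k\big((\psi^k,\phi^k),(\xi^k,\chi^k)\big) = \tfrac12\big[(\psi^k)^\tran \mat{J}_k \chi^k + (\xi^k)^\tran \mat{J}_k \phi^k\big]$; its symmetry is immediate and its diagonal value is exactly $(\psi^k)^\tran \mat{J}_k \phi^k$. Thus $\omega(\psi,\phi)$ is an invariant of precisely the form treated in Theorem~\ref{thm: RK1}. Since a Runge--Kutta method commutes with linearization, the partitioned scheme \eqref{eq: scheme} applied to the combined system integrates each of $\y,\psi,\phi$ with the same coefficients $b^k_i,a^k_{ij}$ inherited from the original partition; hypotheses (i) and (ii) therefore hold for the combined system, and Theorem~\ref{thm: RK1} yields preservation of $\omega(\psi,\phi)$ for all variations, which is symplecticity.

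I expect the main obstacle to be the bookkeeping behind the phrase ``commutes with linearization'': one must verify that the stage equations obtained by differentiating \eqref{eq: scheme} for \eqref{eq: Ham} coincide with those of the same partitioned method applied directly to the variational equation, so that the hypotheses (i)--(ii)---in particular the agreement of all $b^k_i$ across the partition---carry over unchanged to the enlarged system. Everything else reduces to the quadratic-invariant computation already performed in the proof of Theorem~\ref{thm: RK1}.
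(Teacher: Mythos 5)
Your proposal is correct and follows exactly the route the paper takes (and leaves implicit): reduce symplecticity to conservation of the quadratic pairing $\sum_k (\psi^k)^\tran \mat{J}_k \phi^k$ along the variational equation, polarize it into a block-diagonal symmetric bilinear invariant of the combined system $(\y,\psi,\phi)$, and invoke Theorem~\ref{thm: RK1} together with the fact that a (partitioned) Runge--Kutta method commutes with linearization. The paper states this only as ``applying the above Theorem to the variational equation''; your write-up supplies the details of that same argument.
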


We now turn to the result actually needed in this paper, where each component is integrated with a partitioned Runge--Kutta method.

Consider a Hamiltonian system
\begin{equation}\begin{aligned}
\frac{\ud q^k}{\ud t} &= \frac{\partial H}{\partial p^k} = f^k(q,p)\\
\frac{\ud p^k}{\ud t} &= -\frac{\partial H}{\partial q^k} = g^k(q,p) & k&=1,\dotsc, N
\end{aligned}
\label{eq: ode2}
\end{equation}
We integrate the system with an integrator of the form
\begin{equation}
\begin{aligned}
\tilde{q}^k&= q^k + h\sum_{i=1}^s b^k_i F^{k}_i & k&=1,\dotsc,N\\
Q_i^k &= q^k + h\sum_{j=1}^s a^k_{ij} F^{k}_j & k&=1,\dotsc,N, \quad i=1,\dotsc,s\\
F^{k}_i &= f^k(Q^k_i, P^k_i) & k&=1,\dotsc,N, \quad i=1,\dotsc,s\\
\tilde{p}^k&= p^k + h\sum_{i=1}^s \hat{b}^k_i G^{k}_i & k&=1,\dotsc,N\\
P_i^k &= p^k + h\sum_{j=1}^s \hat{a}^k_{ij} G^{k}_j & k&=1,\dotsc,N, \quad i=1,\dotsc,s\\
G^{k}_i &= g^k(Q^k_i, P^k_i) & k&=1,\dotsc,N, \quad i=1,\dotsc,s\\
\end{aligned}
\label{eq: scheme2}
\end{equation}

\begin{theorem}
\label{thm: PRK}
Assume we apply the scheme \eqref{eq: scheme} to the Hamiltonian system \eqref{eq: ode2}. If the coefficients satisfy
\begin{enumerate}[(i)]
\item $\hat{b}^k_ib^k_j = \hat{b}^k_i a^k_{ij} + b^k_j \hat a_{ji}$ \qquad for all $i,j,k$,
\item $b^{k}_i = \hat{b}^k_i$ \qquad for all $k,i$ and
\item $b^1_i=b^2_i= \dotsb = b^N_i$ \qquad for all $i$.
\end{enumerate}
then the integrator is symplectic.
\end{theorem}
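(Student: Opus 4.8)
The plan is to mirror the proof of Theorem~\ref{thm: RK1}, but to track the canonical two-form $\omega = \sum_{k} \ud p^k \wedge \ud q^k$ directly rather than a single quadratic invariant: symplecticity amounts to the identity $\sum_k \ud \tilde p^k \wedge \ud \tilde q^k = \sum_k \ud p^k \wedge \ud q^k$, where the differentials are taken with respect to the initial data $(q,p)$. The role played by \eqref{eq: Bid} in the unpartitioned case is here played by a \emph{fundamental stagewise identity}: since $F^k_i = \partial H/\partial p^k$ and $G^k_i = -\partial H/\partial q^k$ are evaluated at the common stage point $(Q_i, P_i)$, differentiating $F^k_i$ and $G^k_i$ and discarding the symmetric $\partial^2 H/\partial p^k\partial p^l$ and $\partial^2 H/\partial q^k\partial q^l$ contributions (which vanish against the wedge), the remaining mixed Hessian terms cancel after summing over blocks, giving
\[
\sum_k \left( \ud P^k_i \wedge \ud F^k_i + \ud G^k_i \wedge \ud Q^k_i \right) = 0, \qquad i = 1,\dots,s.
\]
I would prove this first, and emphasize that it holds only after the full sum over $k$, since $H$ couples the blocks.

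Next I would carry out the expansion exactly as in Theorem~\ref{thm: RK1}. Using $\tilde q^k = q^k + h\sum_i b^k_i F^k_i$ and $\tilde p^k = p^k + h\sum_i \hat b^k_i G^k_i$,
\[
\ud \tilde p^k \wedge \ud \tilde q^k = \ud p^k \wedge \ud q^k + h\sum_i b^k_i\, \ud p^k \wedge \ud F^k_i + h\sum_i \hat b^k_i\, \ud G^k_i \wedge \ud q^k + h^2 \sum_{i,j} \hat b^k_i b^k_j\, \ud G^k_i \wedge \ud F^k_j.
\]
The ``trick'' step is then to substitute the stage relations $\ud q^k = \ud Q^k_i - h\sum_j a^k_{ij}\,\ud F^k_j$ and $\ud p^k = \ud P^k_i - h\sum_j \hat a^k_{ij}\,\ud G^k_j$ into the two $\bigO(h)$ terms, producing matching $\bigO(h^2)$ contributions. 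Summing over $k$, the leading $\bigO(h)$ part collects to $h\sum_i \sum_k \bigl(b^k_i\, \ud P^k_i \wedge \ud F^k_i + \hat b^k_i\, \ud G^k_i \wedge \ud Q^k_i\bigr)$.

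Here the three conditions enter cleanly. Condition (ii), $b^k_i = \hat b^k_i$, and condition (iii), $b^k_i = b_i$ independent of $k$, let me pull a single factor $b_i$ out of the block sum, so that the unweighted combination $\sum_k(\ud P^k_i \wedge \ud F^k_i + \ud G^k_i \wedge \ud Q^k_i)$ appears and vanishes by the fundamental identity. For the $\bigO(h^2)$ part, after relabelling $i \leftrightarrow j$ in the substituted terms, the coefficient multiplying $\ud G^k_i \wedge \ud F^k_j$ is $\hat b^k_i b^k_j - \hat b^k_i a^k_{ij} - b^k_j \hat a^k_{ji}$, which is exactly the left-hand side of condition (i) minus its right-hand side, hence zero. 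Both orders in $h$ thus cancel for every $k$, and summing gives the desired invariance.

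I expect the genuine obstacle to be pinpointing \emph{why} all three hypotheses are needed, as opposed to just the classical per-block condition (i). The point to highlight is that the fundamental identity is intrinsically a statement about the summed-over-blocks two-form, so the $\bigO(h)$ term cannot be annihilated block by block: one must reduce the $b$-weighted block sum to the unweighted one, which is precisely what conditions (ii) and (iii) achieve (equal $b$ within each block and across blocks). In the application of this paper $N = 2$, with the two symplectic blocks being the $\z$-space and the $(\p,\q)$-space, so condition (iii) specializes to the requirement in the main theorem that the $b$-coefficients of the $\z$-method and of the $(\p,\q)$-method coincide; I would close by making that correspondence explicit.
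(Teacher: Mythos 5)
Your proof is correct and is precisely the argument the paper has in mind: the paper omits the proof of Theorem~\ref{thm: PRK}, stating only that it is analogous to that of Theorem~\ref{thm: RK1}, and your computation --- substituting the stage relations, killing the $\bigO(h^2)$ terms via condition (i), and reducing the $\bigO(h)$ term to the summed-over-blocks stage identity $\sum_k(\ud P^k_i\wedge\ud F^k_i+\ud G^k_i\wedge\ud Q^k_i)=0$ via conditions (ii) and (iii) --- is exactly that analogue carried out in full. The only cosmetic difference is that you track wedge products of differentials directly, whereas the paper's Theorem~\ref{thm: RK1} phrases the same cancellation in terms of quadratic first integrals of the variational equation.
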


The proof of the theorem is analogous to the proof of \ref{thm: RK1}, and is omitted.
%

\section{Numerical experiments}
Numerical tests were done on a simplified version of the spin-lattice-electron
equations.
In this system, the position and velocity of each particle is
confined to a one-dimensional space.
Furthermore, we use periodic boundaries in space and only consider forces
between neighbouring particles.

The total Hamiltonian is
\[
H(\w, \q,\p) = T_L(\p)+ U_L(\q)+ H_m(\w, \q)
\]
where
\[\begin{gathered}
T_L(p) = \sum_{i=1}^N \frac{p_i^2}{2m_i}, \qquad U_L(\q) =  \sum_{i=1}^n
U(q_{i+1}-q_i),\\
H_m(\w,\q)= \sum_{i=1}^n J(q_{i+1}-q_i)\mathbf{z}_i^\tran \mathbf{z}_{i+1}.
\end{gathered}
\]
To effectuate the periodic boundary, we define $\mathbf{z}_{N+1}= \mathbf{z}_1$ and $q_{N+1} = q_1+L$, where $L$ is the period.

The intermolecular potential is the Lennart--Jones' potential
\[
U(r) = U_0\left[\left(\frac{r_m}{r}\right)^{12}-2\left(\frac{r_m}{r}\right)^6\right]
\]
where $r_m$ is the ``rest distance'' i.e. the distance at which $U$ is minimal,
and $U_0$ is a positive scalar which controls the strength of the interaction.

The magnetic force strength is a cubic function, of the same type as given by Ma, Woo and Dudarev \cite{MaDuWo08}
\[
J(r) =J_0 \cdot \left(1-\frac{r}{r_c}\right)^3\cdot \Theta(r_c-r),
\]
where $\Theta$ is the Heaviside step function, and $r_c$ is a cut-off distance. $J_0$ is a scalar which controls the strength of the magnetic interaction.

In the numerical tests performed, the values were set to,
\[
\begin{aligned}
L&=N=30\\
m_i&=1,\\
U_0 &=1,&  r_m &= 1,\\
J_0 &=10, & r_c &=1.5
\end{aligned}
\]

For initial data, we set
\[
q_k = k, \qquad p_k = 0, \qquad \w_k = a_k\begin{bmatrix}0.8 \cos \left(\frac{2\pi k}{n}\right) + 0.5\sin\left(\frac{4\pi k}{n}\right)\\
0.8 \sin \left(\frac{2\pi k}{n}\right) + 0.5\cos\left(\frac{4\pi k}{n}\right) \\
   1
   \end{bmatrix}
\]
where $a_k$ is chosen so that $\norm{\w_k}=1$.

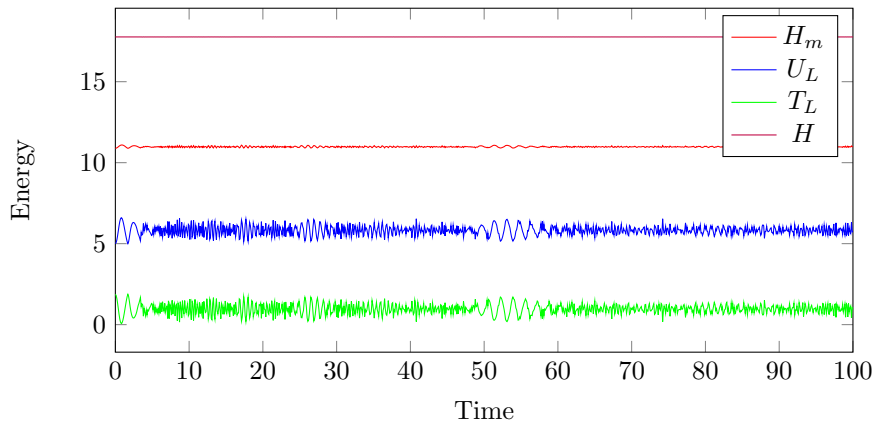
\begin{figure}
  \begin{tikzpicture}
    \begin{axis}[
      width = 0.9\textwidth,
      height = 0.3\textheight,
      xlabel =  {Time},
      ylabel = {Energy},
      xmin = 0, xmax = 100]
      \addplot[ mark=none, color=red] table [x= T, y=Hmag]{data/energies_skipped.dat};
      \addlegendentry{$H_{m}$}
      \addplot[ mark=none, color=blue] table [x= T, y expr=\thisrow{Hpot}+35]{data/energies_skipped.dat};
      \addlegendentry{$U_{L}$}
      \addplot[ mark=none, color=green] table [x= T, y=Hkin]{data/energies_skipped.dat};
      \addlegendentry{$T_L$}
      \addplot[ mark=none, color=purple] table [x= T, y expr = \thisrow{Hmag}+\thisrow{Hpot}+\thisrow{Hkin}+35]{data/energies_skipped.dat};
      \addlegendentry{$H$}
      \end{axis}
  \end{tikzpicture}
  \caption{Energy behaviour over time}
  \label{fig: energy}
\end{figure}

Figure \ref{fig: energy} shows the long term behaviour of the energy terms $T_L,
U_L$ and $H_m$ as well as their sum $H$. (A constant term has been added to
$U_L$ to improve readability). The figure shows that while the energy is
exchanged between the terms with an amplitude on the order of $\bigO(1)$, the variation
in the sum is much smaller, on the order of $\bigO(10^{-3})$.

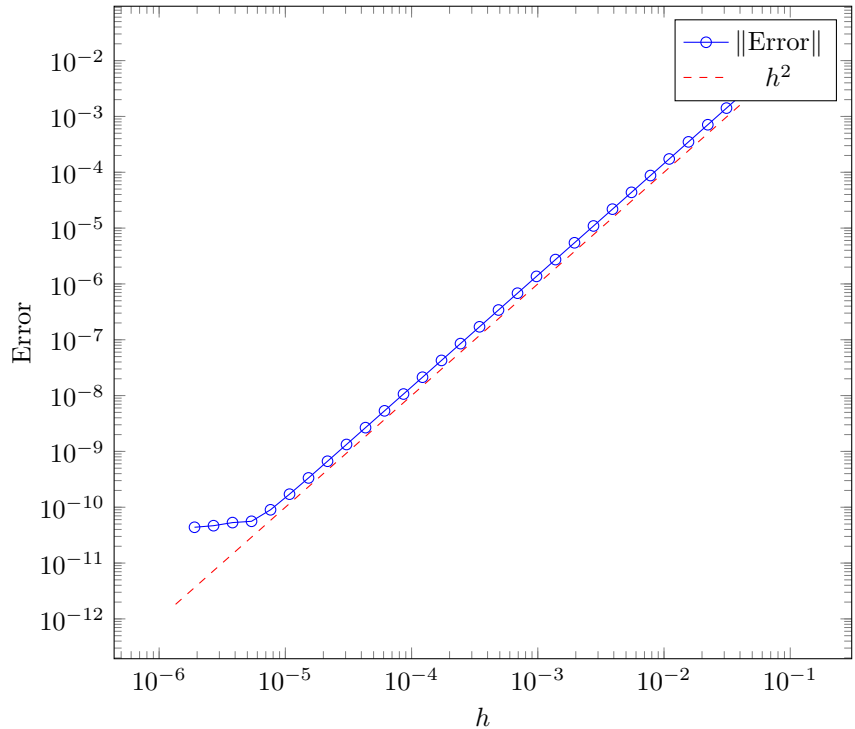
\begin{figure}
  \begin{tikzpicture}
    \begin{loglogaxis}[
      width = 0.9\textwidth,
      height = 0.5\textheight,
      xlabel = {$h$},
      ylabel = {Error}]
      \addplot[mark = o, color=blue] table [x=H, y=Err]{data/Errorplot.dat};
      \addlegendentry{$\norm{\text{Error}}$}
      \addplot[mark=none, color=red, dashed, domain=1.35e-6:0.1, samples=2] {x^2};
      \addlegendentry{$h^2$}
    \end{loglogaxis}
  \end{tikzpicture}
  \caption{Pseudoerror as function of stepsize $h$}
  \label{fig: error}
 \end{figure}
 
The integrator was also tested with various stepsizes $h$ from $h=\frac{1}{16}$
down to $h=2^{-19}$ over the time interval $[0,1]$ and the final values were
compared with the same integrator using stepsize $h=2^{-20}$.
The resulting pseudoerrors are ploted in Figure \ref{fig: error}.
The plot shows the apparent second order of the integrator.

\subsection*{Acknowledgements}
This project has received funding from the Knut and Alice Wallenberg Foundation grant agreement KAW~2014.0354.
The author would like to thank Klas Modin and Olivier Verdier for very useful discussions and comments about collective integrators and the spherical midpoint method. Klas Modin should also be thanked for supplying the Julia code that formed the basis of the implementation used for the numerical tests.

\printbibliography

\end{document}